\newtheorem{theorem}{Theorem}
\newtheorem{lemma}[theorem]{Lemma}
\def\v{v}
\def\n{n}
\def\x{x}
\def\d{\,{\rm d}}
\def\phipsi{\phi_{\bar\psi}}
\def\vd{\v \cdot \nabla}
\def\eps{\varepsilon}
\def\ueps{u_\eps}
\def\gameps{\gamma_\eps}
\def\sigeps{\sigma_\eps}
\def\feps{f_\eps}
\def\geps{g_\eps}
\def\R{\mathcal{R}}
\def\V{\mathcal{V}}
\def\D{{\R \times \V}}
\def\dR{{\partial \R}}
\def\dD{{\dR \times \V}}
\def\G{\Gamma}
\def\RR{\mathbb{R}}
\def\VV{\mathbb{V}}
\def\bargam{\bar \gamma}
\def\barsig{\bar \sigma}
\def\barf{\bar f}
\def\baru{\bar u}
\def\barueps{{\bar u}_\eps}
\def\barfeps{\barf_\eps}
\def\div{\text{\rm div}}
\def\vn{\v \cdot \n}
\def\C{C}
\def\Ceps{{\C_\eps}}
\def\Cepsm{{\C_\eps^{-1}}}
\begin{document}
\title[Diffusion asymptotics with low regularity]{Diffusion asymptotics for linear transport with low regularity}
\author[H. Egger]{Herbert Egger$^\dag$}
\author[M. Schlottbom]{Matthias Schlottbom$^\dag$}
\thanks{$^\dag$Numerical Analysis and Scientific Computing, Department of Mathematics, TU Darmstadt, Dolivostr. 15, 64293 Darmstadt. \\
Email: {\tt $\{$egger,schlottbom$\}$@mathematik.tu-darmstadt.de}}

\begin{abstract}
We provide an asymptotic analysis of linear transport problems in the diffusion limit under minimal 
regularity assumptions on the domain, the coefficients, and the data. The weak form of the limit equation 
is derived and the convergence of the solution in the $L^2$ norm is established 
without artificial regularity requirements. This is important to be able to deal with 
problems involving realistic geometries and heterogeneous media. 
In a second step we prove the usual $O(\eps)$ convergence rates under very mild additional assumptions. 
The generalization of the results to convergence in $L^p$ with $p \ne 2$ and some limitations
are discussed. 
\end{abstract}

\maketitle

{\footnotesize
{\noindent \bf Keywords:} 
radiative transfer, neutron transport, diffusion limit, asymptotic analysis
}

{\footnotesize
\noindent {\bf AMS Subject Classification:}  
35B25, 
35C20, 
82D75, 
85A25 
}

\section{Introduction}

\noindent
We study stationary mono-kinetic linear transport problems
\begin{align} 
 \vd \ueps + \gameps \ueps &= \sigeps (K-I) \ueps + \feps \qquad \text{in } \R \times \V  \label{eq:rte1},\\
  \ueps &= \geps \qquad \text{on } \dR \times \V, \ \n \cdot \v < 0 \label{eq:rte2}.
\end{align}
Here $\R$ is some bounded domain and $\V$ is the sphere with unit surface area.
The density $\ueps$ depends on position and propagation velocity. 
We are interested in the diffusive regime with coefficients and data of the form
\begin{align} \label{eq:ass1}
\gameps= \eps \bargam(\x), \ 
\sigeps = \frac{1}{\eps} \barsig(\x), \quad 
\feps = \eps \barf(\x), \quad \text{and} \quad
\geps = 0.
\end{align}
The parameter $\eps$ is small and has the physical meaning of a mean free path. 
In the case of isotropic scattering, one has 
\begin{align} \label{eq:ass2}
(Ku)(\x,\v) = \int_{\V} u(\x,\v') \d\v' =: \baru(\x).
\end{align}
The bar symbol is used for the velocity average but also to denote functions that do not depend on $\v$. 
More general assumptions than \eqref{eq:ass1}--\eqref{eq:ass2} will be considered below.
Following \cite{HabMat75,LarsenKeller74}, the solution $\ueps$ of the transport problem 
\eqref{eq:rte1}--\eqref{eq:rte2} can be formally written as
\begin{align} \label{eq:exp}
 \ueps = u_0 + \eps u_1 + \eps^2 u_2 + \ldots
\end{align}
Substituting this expansion into the governing equations
and balancing terms with the same power of $\eps$ allows to show by formal arguments that 
the lowest order term is independent of the velocity, i.e., $u_0 = \baru_0$, and 
corresponds to the unique solution of the diffusion problem
\begin{align}
-\div \big( \tfrac{1}{3\barsig} \nabla \baru_0 \big) + \bargam \baru_0 &= \barf \qquad \text{in } \R, \label{eq:da1}\\
 \baru_0 &= 0 \qquad \text{on } \dR. \label{eq:da2}
\end{align}
%
%
Note that when using the expansion \eqref{eq:exp} in the derivation of the limiting diffusion equation, 
one implicitly assumes that 
\begin{align} \label{eq:approximation}
 \ueps = \baru_0 + O(\eps), \qquad \eps \to 0,
\end{align}
which means that $\bar u_0$ is the limit of $\ueps$ as $\eps \to 0$ and that the error in the approximation is of order $\eps$.
The validity and the precise meaning of the formula \eqref{eq:approximation} is, however, not clear without further reasoning.
As we will indicate in our discussion, it is not even true, in general, without further assumptions.
The formal asymptotic argument can be made rigorous by carefully estimating the remainder in the expansion. 
This has been elaborated, e.g., in \cite{BardosSantosSentis84,BlankenshipPapanicolaou78,DL93vol6},  under rather strong additional regularity assumptions on the domain, the coefficients, and the data. The validity of the asymptotic results without such strong conditions seems to be not settled completely; see however \cite{BardosBernardGolseSentis2014} for the time-dependent case and \cite{BardosGolsePerthameSentis1988} for a related nonlinear problem.

In this paper, we address this question: if, and in what sense, 
does $\ueps$ approach the diffusion limit $\baru_0$, under minimal regularity assumptions. 
We feel that this issue is important in practice, where 
heterogeneous materials,  discontinuous coefficients, and irregular geometries 
may arise. In all these cases, the results of \cite{BardosSantosSentis84,DL93vol6} are not applicable directly
and the use of the diffusion approximation lacks a rigorous justification.
Our main result, which partially closes this gap, can be summarized as follows:
\begin{theorem} \label{thm:main}
Assume that \eqref{eq:ass1}--\eqref{eq:ass2} holds 
and let $\ueps$ and $\baru_0$ be the solutions of \eqref{eq:rte1}--\eqref{eq:rte2} and \eqref{eq:da1}--\eqref{eq:da2}, respectively. Then the following assertions hold:\\
(i) 
If $0 < c \le \bargam(\x), \barsig(\x) \le c^{-1}$ for some $c>0$, and $\barf \in L^2(\R)$, then 
\begin{align} \label{eq:res1}
\|\ueps - \baru_0\|_{L^2(\D)} = o(1), \qquad \eps \to 0.
\end{align}
(ii)
If, in addition,  $\dR \in C^{1,1}$, $\barf \in L^p(\R)$, $p>3$, and $\barsig  \in  W^{1,\infty}(\R)$, then
\begin{align} \label{eq:res2}
\|\ueps - \baru_0\|_{L^2(\D)} = O(\eps), \qquad \eps \to 0.
\end{align}
The constants of the latter estimate only depend on the bound $c$ for the coefficients, the function $\barf$, and on the domain $\R$.
\end{theorem}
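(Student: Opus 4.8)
The plan is to establish the two assertions via an energy/compactness approach for part (i) and a corrector-based error estimate for part (ii). The fundamental tool throughout will be the weak formulation of the transport problem \eqref{eq:rte1}--\eqref{eq:rte2} together with the natural a priori energy estimate. First I would test the weak form against $\ueps$ itself. The key structural observation is that the scattering operator $K-I$ under \eqref{eq:ass2} is self-adjoint, negative semidefinite on $L^2(\V)$, and has kernel exactly the velocity-independent functions, so that $-\langle (K-I)\ueps, \ueps\rangle = \|\ueps - \barueps\|_{L^2(\V)}^2$ controls the deviation of $\ueps$ from its velocity average. Combined with the transport term (whose boundary contribution is nonnegative because $\geps = 0$) and the absorption term $\gameps$, this yields the a priori bound
\begin{align} \label{eq:apriori-plan}
\eps \, \|\ueps\|_{L^2(\D)}^2 + \tfrac{1}{\eps}\|\ueps - \barueps\|_{L^2(\D)}^2 \lesssim \eps \, \|\barf\|_{L^2(\R)}^2,
\end{align}
using $\feps = \eps \barf$ and $\gameps = \eps \bargam$. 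This already shows $\|\ueps\|_{L^2(\D)}$ is uniformly bounded and, crucially, that $\|\ueps - \barueps\|_{L^2(\D)} = O(\eps)$, so the solution becomes asymptotically isotropic.

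For part (i), I would then extract a weakly convergent subsequence $\ueps \rightharpoonup u_*$ in $L^2(\D)$; by the control on $\ueps - \barueps$ the limit is velocity-independent, $u_* = \bar u_*$. The task is to identify $u_*$ with $\baru_0$. Here I would pass to the limit in the weak form using well-chosen test functions. Testing against $v$-independent functions $\phi(x)$ kills the streaming term against the isotropic part and, after dividing \eqref{eq:rte1} by $\eps$ and rearranging, testing against test functions of the form $\phi(x) + \eps\, \v\cdot\nabla\psi(x)$ (an approximate corrector reflecting the $O(\eps)$ anisotropy) lets me recover the Fick-type diffusion flux: the odd part of $\ueps$ is asymptotically $-\tfrac{1}{\barsig}\,\v\cdot\nabla\barueps/3$ in a weak sense, and the factor $1/3 = \int_\V (\v\cdot e)^2\d\v$ emerges from the second moment of the velocity measure. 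Passing to the limit produces precisely the weak form of \eqref{eq:da1}--\eqref{eq:da2}, whose solution is unique; uniqueness then upgrades the subsequential convergence to convergence of the whole family. To turn the weak $L^2$ convergence into the strong $o(1)$ convergence claimed in \eqref{eq:res1}, I would use the energy identity: passing to the limit in the dissipation terms and matching them against the limit energy (a standard lower-semicontinuity plus reverse-inequality argument) forces norm convergence, hence strong convergence.

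For part (ii), the strategy shifts to constructing an explicit approximate solution and bounding the residual. Under the added hypotheses ($\dR \in C^{1,1}$, $\barsig \in W^{1,\infty}$, $\barf \in L^p$ with $p>3$), elliptic regularity for \eqref{eq:da1}--\eqml:da2 gives $\baru_0 \in W^{2,p}(\R) \hookrightarrow C^{1}(\bar\R)$, so the gradient $\nabla\baru_0$ is bounded and continuous. I would build the first-order ansatz $\tilde u_\eps = \baru_0 + \eps u_1$ with $u_1(x,\v) = -\tfrac{1}{\barsig(x)}\,\v\cdot\nabla\baru_0(x)$, the natural corrector suggested by the formal expansion, and insert it into the transport operator. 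The residual $r_\eps := \vd \tilde u_\eps + \gameps \tilde u_\eps - \sigeps(K-I)\tilde u_\eps - \feps$ collects the $O(\eps)$ terms that fail to cancel: these involve $\v\cdot\nabla$ acting on the corrector and require exactly the $W^{1,\infty}$ regularity of $\barsig$ and $W^{2,p}$ regularity of $\baru_0$ to be controlled in $L^2(\D)$. Applying the a priori stability estimate \eqref{eq:apriori-plan} to the error $\ueps - \tilde u_\eps$ (which solves the transport equation with right-hand side $-r_\eps$ and boundary data $-\tilde u_\eps|_{\dD, \vn<0}$) then yields $\|\ueps - \tilde u_\eps\|_{L^2(\D)} = O(\eps)$, and since $\|\eps u_1\|_{L^2(\D)} = O(\eps)$ as well, the triangle inequality gives \eqref{eq:res2}.

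The main obstacle is the boundary layer. The corrector $\eps u_1$ does not vanish on the inflow boundary $\dR$ where $\vn < 0$ (whereas $\ueps = 0$ there), so $\tilde u_\eps$ fails to satisfy the boundary condition and the boundary residual is not small in the right norm without care. In the classical references this is handled by subtracting a Milne boundary-layer corrector, which demands strong regularity of $\dR$ and the coefficients. My plan is to avoid constructing an explicit boundary layer and instead exploit that the inflow boundary term enters the energy estimate with a favorable sign, so the only harmful contribution is the portion of $\eps u_1$ on the inflow boundary; because $\nabla\baru_0$ is bounded and the measure of the relevant boundary set scales appropriately, this term can be shown to contribute at worst $O(\eps)$ in the $L^2$ estimate rather than the larger $O(\sqrt{\eps})$ one might naively fear. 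Making this boundary-term bound rigorous under merely $C^{1,1}$ and $W^{1,\infty}$ hypotheses, rather than the $C^\infty$ assumptions of the classical theory, is precisely where the low-regularity difficulty concentrates and where the delicate part of the argument lies.
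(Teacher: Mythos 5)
Your plan for part (i) is workable but follows a different route than the paper: you propose to upgrade weak to strong $L^2$ convergence by passing to the limit in the energy identity (lower semicontinuity of the dissipation against the limit energy), whereas the paper bounds $\vd\ueps$ uniformly in $L^2$ (Lemma~\ref{lem:apriori}(iv), obtained by testing with $\Cepsm\vd\ueps$) and then invokes the velocity averaging lemma \eqref{eq:avglem} to get strong compactness of $\barueps$ outright. Your route can be closed --- it is essentially the energy argument of the time-dependent analysis cited in the paper --- but it requires identifying the weak $L^2$ limit of the rescaled flux $\frac{1}{\eps}\int_\V \v\,\ueps\d\v$ and ruling out an energy defect on $\G_+$, neither of which is the one-line "standard" step you suggest. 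The identification of the limit equation via moment test functions of the form $\phi+\eps\,\vd\psi$ is essentially the paper's Steps 2--6.

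Part (ii) has two genuine gaps. First, the interior residual: applying your a priori estimate to the error equation with a right-hand side of size $O(\eps)$ in $L^2(\D)$ gives only $O(1)$, because the sharp bound controls $\eps\|u\|_{L^2(\D)}^2$ by $\|f\|_{\Cepsm}^2 \approx \frac{1}{\eps}\|\bar f\|_{L^2}^2+\eps\|f-\bar f\|_{L^2}^2$ (see \eqref{eq:equiv2} and Lemma~\ref{lem:aprioriL2}); a source of size $\eps$ with nonvanishing velocity average therefore yields nothing better than $\|u\|_{L^2}=O(1)$. The paper's Step 4 rests on the structural cancellation $\overline{\vd u_1}=\div(\bar A\nabla\baru_0)$, which makes the interior residual have \emph{zero velocity average}, so that its $\Cepsm$-norm carries an extra factor $\eps^{1/2}$ and the $O(\eps)$ rate survives; you never record this fact, and without it the rate is lost. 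Second, and more seriously, the boundary term: your heuristic (favorable sign of the inflow term plus "the measure of the relevant boundary set scales appropriately") cannot give $O(\eps)$. In the scattering-dominated regime the influence of inflow data is \emph{not} confined to a boundary layer --- it penetrates the whole domain diffusively --- and the $L^2$ energy estimate is capped at $\|\psi_\eps^g\|_{L^2(\D)}\preceq \eps^{-1/2}\|\eps u_1\|_{L^2(\G_-;|\vn|)}=O(\eps^{1/2})$, which is sharp for this method. The paper escapes this by splitting off the boundary-driven part $\psi_\eps^g$ and applying the $L^\infty$ a priori bound \eqref{eq:aprioriLp} with $p=\infty$, i.e.\ $\|\psi_\eps^g\|_{L^\infty(\D)}\preceq\|\eps u_1\|_{L^\infty(\G_-)}=O(\eps)$, a maximum-principle-type estimate with no $\eps$-loss; this is precisely where the hypotheses $\barf\in L^p$, $p>3$, and $W^{2,p}\hookrightarrow C^1$ (hence $\nabla\baru_0\in L^\infty(\dR)$) are consumed. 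Without importing such an $L^\infty$ (or $L^p$, $p$ large) stability estimate, your argument stalls at $O(\eps^{1/2})$.
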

We will prove this result under more general assumptions than \eqref{eq:ass1}--\eqref{eq:ass2} on the scattering operator and the parameters; see Theorems~\ref{thm:main1} and \ref{thm:main2} below.
Before we proceed, let us put our results into perspective and shortly describe the main tools used to derive them:  
Due to its practical relevance, the asymptotic analysis of the diffusion limit has stimulated a vast amount of literature:
The formal arguments of the seminal papers \cite{HabMat75,LarsenKeller74} have been made rigorous in \cite{BardosBernardGolseSentis2014,BardosGolseSentisPerthame1987,BardosSantosSentis84} and in \cite{BardosGolsePerthameSentis1988} for a nonlinear transport equation; see also \cite{DL93vol6} for a self-contained presentation of the main results.
More general problems have been considered, e.g., in \cite{DegondMas-Gallic87,GoudonMellet2003,GuoHan2012}. 
Similar formal arguments are also used for the construction of asymptotically stable numerical schemes, see e.g.   \cite{Germogenova98,Larsen2010,LarsenMorel89,LarsenMorelMiller87}.
We see our results as an extension to those presented in \cite{DL93vol6} and refer to this monograph also for further results and many more references. 

Under the assumptions of Theorem~\ref{thm:main}, we will show by variational arguments that the solutions $\ueps$ of \eqref{eq:rte1}--\eqref{eq:rte2} together with their directional derivatives are uniformly bounded in $L^2(\D)$ for all $\eps>0$. 
Similar estimates have also been derived in \cite{BardosGolsePerthameSentis1988} for a nonlinear variant of \eqref{eq:rte1}.

These a-priori estimates allow us to extract a weakly convergent subsequence which is shown to converge strongly in $L^2$ to a limit 
which is independent of the velocity variable. 
We will verify that any such limit satisfies the weak form of the diffusion problem \eqref{eq:da1}--\eqref{eq:da2} which yields the first statement of the theorem. Our derivation thus provides a rigorous justification of the diffusion approximation without regularity requirements for the coefficients or the domain.
Let us mention, that our arguments are similar to those of \cite{BardosGolsePerthameSentis1988}, where however the special nonlinear structure $\barsig=\barsig(\baru_\eps)$ was exploited in a crucial way. The results of \cite{BardosGolsePerthameSentis1988} thus do not carry over immediately to the case considered here.

When completing this manuscript we became aware of the very interesting work \cite{BardosBernardGolseSentis2014}, where the diffusion approximation for time-dependent linear transport is analyzed. 
Employing energy estimates, the authors of \cite{BardosBernardGolseSentis2014} obtain strong $L^2$-convergence of the solution to the time-dependent transport equation to a solution of a parabolic equation under rather general assumptions on the parameters and the domain $\R$.
Quantitative estimates for the convergence like \eqref{eq:res2} are not investigated in \cite{BardosBernardGolseSentis2014}.
Our results concern stationary transport and thus complement those of \cite{BardosBernardGolseSentis2014}.


%

For the derivation of the quantitative estimate \eqref{eq:res2} of Theorem~\ref{thm:main}, 
we employ an expansion of the form $\ueps = \baru_0 + \eps u_1 + \psi_\eps$. 
While we use the usual choice $u_1 = -\frac{1}{3\barsig} \vd \baru_0$ for the first order term, 
we do not use a second order term $\eps^2 u_2$, as e.g. in \cite{BardosSantosSentis84,DL93vol6}, here. 
This allows us to substantially relax the regularity assumptions on the coefficients and the data required for the asymptotic analysis. 
To establish the $O(\eps)$ bound for the remainder $\psi_\eps$, we 
utilize sharp a-priori bounds for the solutions of \eqref{eq:rte1}--\eqref{eq:rte2} in $L^p$ spaces 
which are obtained again by variational arguments. 

%
Let us remark at this point that the results of Theorem~\ref{thm:main} can be extended to $L^p$ spaces with $p \ne 2$ to some extent and without further regularity assumptions. This will be discussed at the end of the manuscript. Of course, the asymptotic estimates of \cite{BardosSantosSentis84,DL93vol6} can be applied directly for sufficiently regular problems.
%

\bigskip

The outline of the paper is as follows:
In the next section we will introduce the basic notation used throughout the manuscript. 
In Section~\ref{sec:prelim} we fix the main assumptions on the parameters and the scattering operator and state some preliminary results. 
Sharp a-priori estimates on the solution to the transport equation are then derived in Section~\ref{sec:apriori}. 
Using these a-priori estimates convergence of the solutions to the transport equation as $\eps$ tends to zero is then shown in Section~\ref{sec:convergence}. 
The quantitative estimates are established in Section~\ref{sec:rates} under some mild additional regularity assumptions.
We conclude with a short discussion of our results, about their limitation, and possible generalizations.

\section{Notation} \label{sec:notation}

\noindent
For the rest of the manuscript we assume that  
\begin{itemize} \parskip1ex 
 \item[(A1)] $\V=\{v \in \RR^3 : 4 \pi |v|^2 = 1\}$ is the sphere with unit surface measure and $\R \subset \RR^3$ is a bounded Lipschitz domain.
\end{itemize}
%
The Lebesgue spaces of measurable functions on $\D$ are denoted by $L^p(\D)$, $1 \le p \le \infty$.
These spaces are complete with respect to the norm  
$\|u\|_{L^p(\D)} = \big( \int_\D |u|^p \d(\x,\v) \big)^{1/p}$
and $L^2(\D)$ is a Hilbert space with scalar product 
$
(u,w) = (u,w)_{L^2(\D)} = \int_\D u w \d(\x,\v).
$
We will use similar notation for integration and spaces defined over other domains. 

Since the boundary $\dR$ is regular, we can define for a.e. $\x \in \dR$ the outer unit normal vector $\n(\x)$ and 
we denote by 
\begin{align*}
\Gamma_\pm := \{(\x,\v) \in \dD : \pm \n(\x) \cdot \v > 0\}
\end{align*}
the inflow and outflow part of the boundary, respectively. 
We write
\begin{align} \label{eq:tsp}
L^p(\dD;|\vn|) = \overline{L^p(\dD)}^{\|\cdot\|_{L^p(\dD;|\vn|)}},  
\end{align}
the weighted $L^p$ space of functions defined over the boundary 
with finite norm
$\|u\|_{L^p(\dD;|\vn|)} = \big( \int_\dD |u|^p |\vn| \d(\x,\v)\big)^{1/p}.$
These spaces are complete by construction and the choice $p=2$ yields again a Hilbert space. 
Spaces of functions over $\Gamma_\pm$ can be defined by restriction.

By $\nabla u(\x,\v)$ we denote the partial gradient of a smooth function with respect to the spatial variables $\x$, and we call $\vd u(\x,\v)$ the directional derivative. 
According to \cite{Agoshkov98,Cessenat84}, any function in $L^p(\D)$ having weak directional derivatives in $L^p(\D)$ 
has a well-defined trace on $\dD$. This allows us to define the energy spaces
\begin{align*}  
 \VV^p  = \{ u \in L^p(\D) : & \ \vd u \in L^p(\D),  u|_{\dD} \in L^p(\dD;|\vn|)\}
\end{align*}
with norm $\|u\|_{\VV^p} = (\|u\|_{L^p(\D)}^p +  \|\vd u\|_{L^p(\D)}^p + \|u\|_{L^p(\dD;|\vn|)}^p)^{1/p}$. 
The spaces $\VV^p$ are again complete and the case $p=2$ yields a Hilbert space.
By definition, the trace operator is a continuous mapping from $\VV^p$ to $L^p(\dD;|\vn|)$. 
Via a density argument, the Green's formula 
\begin{align} \label{eq:green}
(\vd u, w)_{\D} = -(u, \vd w)_{\D} + (\vn \, u, w)_{\dD} 
\end{align}
can be shown to hold for all functions $u \in \VV^p$, $w \in \VV^{p'}$ with $\frac{1}{p}+\frac{1}{p'}=1$. 
For  $u \in L^p(\D)$, we define the velocity average by
\begin{align} \label{eq:avg}
 \baru(\x) =  \int_\V u(\x,\v) \d\v = (u(\x,\cdot), 1_\v)_\V.
\end{align}
Here  $1_\v$ denotes the constant function in $L^\infty(\V)$.
By Fubini's theorem, $\baru$ is a function in $L^p(\R)$. 
We will tacitly identify functions in $L^p(\R)$ with the corresponding functions in $L^p(\D)$ which are independent of $\v$. 
Any function $u \in L^2(\D)$ can then be decomposed into 
\begin{align} \label{eq:split}
 u = \bar u + (u-\bar u),
\end{align}
and this splitting is orthogonal in $L^2(\D)$. 
One can show that 
\begin{align} \label{eq:avglem}
\text{the mapping } \pi : \VV^p \to L^p(\R), \quad u \mapsto \bar u \quad \text{is compact.}
\end{align}
Such statements are known as averaging lemmas and will be used in our analysis below. 
Let us refer to \cite{deVorePetrova00,GerardGolse1992,GolseLionsPerthameSentis88} for details and generalizations.

To simplify notation, we will sometimes write $a \preceq b$ or $a = O(b)$ meaning $a \le C b$ with a constant depending only on the domain or other generic constants. We also use $a \approx b$ to abbreviate $a \preceq b$ and $b \preceq a$. The inner product of two vectors is denoted by $a \cdot b$ or $a^\top b$, and we write $a \otimes b$ for the matrix $a b^\top$.

\section{Assumptions and preliminaries} \label{sec:prelim}

\noindent
To ensure the well-posedness of the radiative transfer problems 
\eqref{eq:rte1}--\eqref{eq:rte2} and in order to derive uniform a-priori bounds, 
we assume that  
\begin{itemize} \parskip1ex
 \item[(A2)] $\gameps(\x,\v) = \eps \bargam(\x)$ 
             with $c \le \bargam(\x) \le c^{-1}$ for a.e. $\x \in \R$,
 \item[(A3)] $\sigeps(\x,\v) = \frac{1}{\eps} \barsig(\x)$
             with $c \le \barsig(\x) \le c^{-1}$ for a.e. $\x \in \R$,
\end{itemize}
with some constant $c>0$. 
In addition, we impose the following abstract assumptions on 
the scattering operator 
\begin{itemize}\parskip1ex
 \item[(A4a)] $K$ is a self-adjoint positive linear operator on $L^2(\V)$.
 \item[(A4b)] For all $1 \le p \le \infty$ there holds $\|Ku\|_{L^p(\V)} \le \|u\|_{L^p(\V)}$.
 \item[(A4c)] $N(I-K)=\{u \in L^2(\V) : u(\v) = \baru\}$.
 \item[(A4d)] $(I-K) u = f$ is solvable in $L^2(\V)$, if, and only if, $\barf=0$. 
              In this case, any solution has the form $u(\v)=u_0 + u_1(\v)$ where
              $u_1$ has zero average and $\|u_1\|_{L^2(\V)} \le c_K \|f\|_{L^2(\V)}$ for some $c_K\geq 1$.
\end{itemize}
Note that the assumptions (A4) are valid for the isotropic scattering operator \eqref{eq:ass2}, but also for more general scattering operators of the form 
$$ 
  (K u)(\v) = \int_\V k(\v,\v') u(\v') \d\v' 
$$
under mild and physically reasonable conditions on the scattering kernel, e.g., 
when $k$ is positive, bounded, and symmetric with respect to $\v$ and $\v'$; see for instance \cite{DL93vol6}.
As a consequence of the assumptions (A4), the eigenvalues of the operator $I-K$ lie in the the interval $[0,1]$ 
and $0$ is a simple eigenvalue with eigenspace $N(I-K) = {\rm span}\{1_\V\}$.
Due to (A4d), one has
\begin{align}\label{eq:ev}
   c_K^{-1} \|u\|_{L^2(\V)}^2 \leq  ((I-K)u,u)_{L^2(\V)} \leq \|u\|_{L^2(\V)}^2\quad\text{for } u\in N(I-K)^\perp.
\end{align}
This provides upper and lower norm estimates for the Moore-Penrose inverse $(I-K)^\dag$ as operator on $L^2(\D)$.  
%
%
The scattering operator extends to an operator on $L^p(\D)$ by setting 
$(Ku)(\x,\v)= (K u(\x,\cdot))(\v)$. 
%
%
By 
\begin{align} \label{eq:Ce}
 \Ceps : L^2(\D) \to L^2(\D), \qquad u \mapsto \gameps u + \sigeps (I-K) u,
\end{align}
we denote the collision operator which by (A2)--(A4) can be shown to be linear, self-adjoint,  bounded, and strictly positive on $L^2(\D)$.
The transport problem \eqref{eq:rte1}--\eqref{eq:rte2} can then be written in compact form as 
\begin{align} \label{eq:rte}
\vd \ueps + \Ceps \ueps = \feps \quad \text{in } \D,
\qquad  
\ueps = \geps \quad \text{on } \G_-.
\end{align}
We will denote by
\begin{align} \label{eq:Cenorm}
(u,v)_\Ceps = (\Ceps u, v) 
\quad \text{and} \quad  
\|u\|_\Ceps = (u,u)_\Ceps^{1/2},
\end{align}
the scalar product and norm induced by $\Ceps$ and use similar notation for the norm induced by the inverse $\Cepsm$. 
Using assumptions (A2)--(A4), one easily verifies that 
\begin{align} \label{eq:equiv1}
\|u\|_\Ceps^2 &\approx \frac{1}{\eps} \|u-\baru\|_{L^2(\D)}^2 + \eps \|\baru\|_{L^2(\D)}^2, 
\\ \label{eq:equiv2}
 \|u\|_{\Cepsm}^2 &\approx \eps \|u-\baru\|_{L^2(\D)}^2 + \frac{1}{\eps}\|\baru\|_{L^2(\D)}^2,
\end{align}
holds for all $u \in L^2(\D)$. 
The constants in these norm equivalence estimates only depend on the constants of the assumptions (A2)--(A4).

\section{A-priori estimates} \label{sec:apriori}

\noindent
Let us recall the following well-posedness result from \cite{ES13a}; see also \cite{DL93vol6}.
\begin{lemma} \label{lem:aprioriLp}
Assume that conditions (A1)--(A4) hold. Then for any $\eps>0$, $\feps \in L^p(\D)$, and $\geps \in L^p(\G_-;|\vn|)$, 
the linear transport problem \eqref{eq:rte1}--\eqref{eq:rte2} has a unique solution $\ueps \in \VV^p$.
In addition, there holds
\begin{align} \label{eq:aprioriLp}
&\|\ueps\|_{L^p(\D)} + \eps^{-1/p} \|\ueps\|_{L^p(\G_+;|\vn|)} + \eps \|\vd \ueps\|_{L^p(\D)} \\ 
& \qquad \qquad \qquad \le C_p ( \eps^{-1} \|\feps\|_{L^p(\D)} + \eps^{-1/p}\|g\|_{L^p(\G_-;|\vn|)} ) \notag.
\end{align}
The constant $C_p$ of this a-priori estimate depends only on $p$, the domain $\R$, and the constants appearing in the assumptions.
\end{lemma}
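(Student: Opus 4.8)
The plan is to take the existence and uniqueness of $\ueps \in \VV^p$ from the cited well-posedness theory and to devote the argument to the quantitative bound \eqref{eq:aprioriLp}, which I would derive by a monotonicity argument: test the compact form \eqref{eq:rte} of the transport problem against the duality map $w = |\ueps|^{p-2}\ueps$. For the transport term, the chain rule $|\ueps|^{p-2}\ueps\,\vd\ueps = \tfrac1p\,\vd(|\ueps|^p)$ together with Green's formula \eqref{eq:green} (equivalently, the divergence theorem and $\div\,\v=0$) turns $(\vd\ueps,|\ueps|^{p-2}\ueps)_\D$ into the boundary contribution $\tfrac1p\big(\|\ueps\|_{L^p(\G_+;|\vn|)}^p - \|\geps\|_{L^p(\G_-;|\vn|)}^p\big)$, where I used the inflow condition $\ueps=\geps$ on $\G_-$ and the sign of $\vn$ on $\G_\pm$. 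The right-hand side is estimated by H\"older as $(\feps,|\ueps|^{p-2}\ueps)_\D \le \|\feps\|_{L^p(\D)}\,\|\ueps\|_{L^p(\D)}^{p-1}$.

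The decisive ingredient is a coercivity bound $(\Ceps\ueps,|\ueps|^{p-2}\ueps)_\D \ge \eps c\,\|\ueps\|_{L^p(\D)}^p$ for the collision operator. Splitting $\Ceps = \gameps I + \sigeps(I-K)$, the absorption part contributes $\eps\int_\R\bargam\int_\V|\ueps|^p\,\d\v\,\d\x \ge \eps c\,\|\ueps\|_{L^p(\D)}^p$ by (A2), while the scattering part is nonnegative: for a.e. $\x$ one has $\int_\V\big((I-K)\ueps\big)|\ueps|^{p-2}\ueps\,\d\v = \|\ueps\|_{L^p(\V)}^p - \int_\V(K\ueps)|\ueps|^{p-2}\ueps\,\d\v \ge \|\ueps\|_{L^p(\V)}^p - \|K\ueps\|_{L^p(\V)}\,\|\ueps\|_{L^p(\V)}^{p-1} \ge 0$, using H\"older and the contractivity (A4b); multiplication by $\barsig\ge0$ and integration over $\R$ preserves the sign.

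Combining these two steps yields $\tfrac1p\|\ueps\|_{L^p(\G_+;|\vn|)}^p + \eps c\,\|\ueps\|_{L^p(\D)}^p \le \|\feps\|_{L^p(\D)}\|\ueps\|_{L^p(\D)}^{p-1} + \tfrac1p\|\geps\|_{L^p(\G_-;|\vn|)}^p$. A weighted Young inequality absorbs $\|\feps\|_{L^p(\D)}\|\ueps\|_{L^p(\D)}^{p-1}$ into $\tfrac{\eps c}{2}\|\ueps\|_{L^p(\D)}^p$ at the cost of a term $\approx \eps^{-(p-1)}\|\feps\|_{L^p(\D)}^p$; taking $p$-th roots then reproduces exactly the stated powers $\eps^{-1}\|\feps\|_{L^p(\D)}$ and $\eps^{-1/p}\|\geps\|_{L^p(\G_-;|\vn|)}$ for both $\|\ueps\|_{L^p(\D)}$ and $\eps^{-1/p}\|\ueps\|_{L^p(\G_+;|\vn|)}$. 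The directional-derivative bound follows from the equation itself: $\vd\ueps = \feps - \Ceps\ueps$ with $\|\Ceps\ueps\|_{L^p(\D)} \preceq \eps\|\ueps\|_{L^p(\D)} + \tfrac1\eps\|(I-K)\ueps\|_{L^p(\D)} \preceq \tfrac1\eps\|\ueps\|_{L^p(\D)}$ by (A4b), so multiplying by $\eps$ and inserting the bound for $\|\ueps\|_{L^p(\D)}$ gives the desired estimate for $\eps\|\vd\ueps\|_{L^p(\D)}$. For $p=2$ this is precisely testing with $\ueps$ and reading off \eqref{eq:equiv1}--\eqref{eq:equiv2}.

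I expect the main obstacle to be the rigorous justification of testing with $w=|\ueps|^{p-2}\ueps$, which need not lie in $\VV^{p'}$, so that both the chain rule and Green's formula are only formal for a merely weakly differentiable solution. I would handle this by a renormalization argument in the spirit of DiPerna--Lions: approximate the nonlinearity by a smooth, bounded, nondecreasing truncation $\beta_\delta$, for which \eqref{eq:green} and the chain rule are legitimate, establish the inequalities for $\beta_\delta(\ueps)$, and pass to the limit $\delta\to0$ using the integrability already furnished by $\ueps\in\VV^p$. The endpoint cases $p\in\{1,\infty\}$ are not reached by this testing scheme and would be treated separately by duality or by letting $p\to\infty$, which is presumably the reason the sharp statement is quoted from \cite{ES13a}.
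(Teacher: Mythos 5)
The paper does not prove this lemma at all: it is imported verbatim from \cite{ES13a} (``Let us recall the following well-posedness result\dots''), so there is no in-paper argument to match. Your proposal supplies an actual proof, and for $1<p<\infty$ it is essentially sound and is the natural $L^p$ analogue of the variational computation the paper does carry out in Lemma~\ref{lem:aprioriL2}: testing with the duality map $|\ueps|^{p-2}\ueps$, the boundary terms come out with the correct weight $|\vn|$ and signs, the accretivity of the scattering part follows correctly from (A4b) via H\"older (your pointwise-in-$\x$ computation $\int_\V((I-K)u)|u|^{p-2}u\,\d\v\ge \|u\|_{L^p(\V)}^p-\|Ku\|_{L^p(\V)}\|u\|_{L^p(\V)}^{p-1}\ge 0$ is exactly right), the absorption part delivers the $\eps c\|\ueps\|_{L^p(\D)}^p$ coercivity, and the weighted Young inequality reproduces the stated powers $\eps^{-1}\|\feps\|_{L^p(\D)}$ and $\eps^{-1/p}\|\geps\|_{L^p(\G_-;|\vn|)}$; the derivative bound read off from $\vd\ueps=\feps-\Ceps\ueps$ with $\|(I-K)\ueps\|_{L^p}\le 2\|\ueps\|_{L^p}$ is also correct (for $\eps$ bounded above, which is the only regime of interest). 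Your identification of the real technical obstruction --- that $|\ueps|^{p-2}\ueps$ need not lie in $\VV^{p'}$, so the chain rule and \eqref{eq:green} must be justified by truncation/renormalization --- is accurate, and the proposed fix is the standard one.

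The one point you should not leave as an aside is $p=\infty$: the paper explicitly relies on that endpoint (it states ``this result particularly holds for $p=\infty$'' and invokes it in Step~5 of Section~\ref{sec:rates} to bound $\psi_\eps^g$), so a complete proof must cover it. Your testing scheme does not reach it directly, but note that your constants are uniform as $p\to\infty$ (the Young-inequality factor behaves like $(2/(p'c))^{(p-1)/p}$, which stays bounded), so one can either pass to the limit $p\to\infty$ in your estimates on the bounded domain $\D$, or argue separately by integrating along characteristics / a maximum principle, for which the contractivity (A4b) at $p=\infty$ is exactly what is needed. Either route closes the endpoint; without one of them your proof covers only $1<p<\infty$ and does not fully substitute for the cited result.
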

This result particularly holds for $p =\infty$. 
%
%
For our analysis, we will utilize the following sharper estimates which can be obtained in the $L^2$ setting.
\begin{lemma} \label{lem:aprioriL2}
Let (A1)--(A4) hold and let $\ueps$ be the solution of \eqref{eq:rte1}--\eqref{eq:rte2} with $\feps \in L^2(\D)$ and $\geps \in L^2(\G_-;|\vn|)$.
Then
\begin{align*}
 &\|\ueps\|^2_{L^2(\G_+;|\vn|)} + \frac{1}{\eps}\|\ueps - \barueps\|_{L^2(\D)}^2 + \eps \|\barueps\|_{L^2(\D)}^2 
 \\ &\qquad \qquad \qquad \le C \big( \|g_\eps\|^2_{L^2(\G_-;|\vn|)}  + \frac{1}{\eps}\|\bar f_\eps\|_{L^2(\D)}^2 + \eps \|f_\eps-\bar f_\eps\|_{L^2(\D)}^2\big)
\end{align*}
with $C$ only depending on the domain and the constants of the assumptions.
\end{lemma}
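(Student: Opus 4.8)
The plan is to test the transport equation \eqref{eq:rte} against suitable multipliers and exploit the Green's formula \eqref{eq:green} together with the norm equivalences \eqref{eq:equiv1}--\eqref{eq:equiv2}. The natural first step is to pair the equation $\vd \ueps + \Ceps \ueps = \feps$ with $\ueps$ itself in $L^2(\D)$. By Green's formula the streaming term produces a boundary contribution $\tfrac12 (\vn \ueps, \ueps)_{\dD}$, which splits as $\tfrac12\|\ueps\|_{L^2(\G_+;|\vn|)}^2 - \tfrac12\|\ueps\|_{L^2(\G_-;|\vn|)}^2$ since $\vn>0$ on $\G_+$ and $\vn<0$ on $\G_-$. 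The collision term yields $\|\ueps\|_\Ceps^2$, and by \eqref{eq:equiv1} this controls precisely the two bulk quantities $\tfrac1\eps\|\ueps-\barueps\|_{L^2(\D)}^2$ and $\eps\|\barueps\|_{L^2(\D)}^2$ appearing on the left-hand side. The right-hand side is $(\feps,\ueps)_\D$ plus the inflow boundary term. This already produces the correct structure:
\begin{align*}
\tfrac12\|\ueps\|_{L^2(\G_+;|\vn|)}^2 + \|\ueps\|_\Ceps^2 = \tfrac12\|\geps\|_{L^2(\G_-;|\vn|)}^2 + (\feps,\ueps)_\D.
\end{align*}

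The main obstacle is to absorb the source term $(\feps,\ueps)_\D$ correctly. A crude Cauchy--Schwarz bound $\|\feps\|_{L^2(\D)}\|\ueps\|_{L^2(\D)}$ would not reproduce the sharp $\eps$-weights stated in the lemma, because $\|\ueps\|_{L^2(\D)}^2 = \|\barueps\|_{L^2(\D)}^2 + \|\ueps-\barueps\|_{L^2(\D)}^2$ mixes a factor that is weighted by $\eps$ with one weighted by $1/\eps$. The remedy is to pair $\Ceps$ with its inverse via the Cauchy--Schwarz inequality in the $\Ceps$-$\Cepsm$ duality, namely $(\feps,\ueps)_\D \le \|\feps\|_{\Cepsm}\,\|\ueps\|_\Ceps \le \tfrac12\|\feps\|_{\Cepsm}^2 + \tfrac12\|\ueps\|_\Ceps^2$, so that half of $\|\ueps\|_\Ceps^2$ is absorbed into the left-hand side. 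Applying \eqref{eq:equiv2} to $\|\feps\|_{\Cepsm}^2$ gives exactly $\eps\|\feps-\barfeps\|_{L^2(\D)}^2 + \tfrac1\eps\|\barfeps\|_{L^2(\D)}^2$, which matches the claimed right-hand side.

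To finish, I would handle the inflow boundary term by simply keeping it on the right, giving the $\|\geps\|_{L^2(\G_-;|\vn|)}^2$ contribution. Collecting terms, the remaining half of $\|\ueps\|_\Ceps^2$ together with the outflow boundary term gives the full left-hand side of the lemma after one more application of the norm equivalence \eqref{eq:equiv1}, and all constants depend only on those in the assumptions (A2)--(A4) and on the domain. The one technical point that deserves care is justifying these manipulations for solutions $\ueps\in\VV^2$ rather than smooth functions: this is exactly where the density argument underlying \eqref{eq:green} and the trace continuity of $\VV^2\to L^2(\dD;|\vn|)$ are needed, but those are already established in the notation section, so the estimate extends to all admissible data by approximation. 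I expect the bookkeeping of the $\eps$-powers — making sure the duality pairing is used rather than a naive split — to be the only genuinely delicate step.
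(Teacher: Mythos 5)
Your proposal is correct and follows essentially the same route as the paper: testing with $\ueps$ itself, converting the streaming term into boundary contributions via \eqref{eq:green}, estimating $(\feps,\ueps)$ through the $\Ceps$--$\Cepsm$ duality to absorb half of $\|\ueps\|_{\Ceps}^2$, and concluding with the norm equivalences \eqref{eq:equiv1}--\eqref{eq:equiv2}. The only stylistic difference is that you make the density/trace justification and the rationale for avoiding a naive Cauchy--Schwarz explicit, which the paper leaves implicit.
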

\begin{proof}
Recall that $\bar u(\x) = \int_\V u(\x,\v) \d\v'$ denotes the velocity average.
We multiply \eqref{eq:rte1} by $\ueps$ and integrate over $\D$ to get
\begin{align} \label{eq:lemap1}
 (\v \cdot \nabla \ueps, \ueps) + \|\ueps\|_{\C_\eps}^2 = (f_\eps, \ueps).
\end{align}
The integration-by-parts formula \eqref{eq:green} allows to recast the first term as
\begin{align*} 
 (\vd \ueps, \ueps) 
&= \frac{1}{2} \big(\|\ueps\|^2_{L^2(\Gamma_+;|\vn|)} - \|\ueps\|^2_{L^2(\Gamma_-;|\vn|)}\big).
\end{align*}
Using the boundary condition \eqref{eq:rte2}, we can substitute $\geps$ for $\ueps$ in the last term.
The right-hand side of \eqref{eq:lemap1} can be further estimated by 
\begin{align*} 
(\feps,\ueps) 
 &\le \tfrac{1}{2}\|\feps\|_{\Cepsm}^2 + \tfrac{1}{2} \|\ueps\|_{\Ceps}^2.
\end{align*} 
Inserting these two formulas into \eqref{eq:lemap1} and rearranging terms, we obtain
\begin{align*}
\|\ueps\|_{L^2(\G_+;|\vn|)}^2 + \|\ueps\|_{\Ceps}^2 
&\le \|\feps\|_{\Cepsm}^2 + \|\geps\|^2_{L^2(\G_-;|\vn|)}.
\end{align*}
The result now follows via the norm equivalences  \eqref{eq:equiv1} and \eqref{eq:equiv2}.
\end{proof}
Since the estimate of Lemma~\ref{lem:aprioriL2} is sharp, 
some additional conditions on the data $\feps$ and $\geps$ 
will be  required 
to obtain uniform bounds for the solution as $\eps \to 0$. 
We therefore assume in the following that
\begin{itemize} \parskip1ex
 \item[(A5)] $\|\feps\|_{L^2(\D)} = O(1)$ \ and \ $\|\barfeps\|_{L^2(\D)} = O(\eps)$. 
 \item[(A6)] $\|\geps\|_{L^2(\G_-;|\vn|)}= O(\eps^{1/2})$.
\end{itemize}
As a consequence of these assumptions and the previous lemma, we obtain
\begin{lemma}\label{lem:apriori}
Let (A1)--(A6) hold. Then
\begin{align*}
{\rm (i)} & \quad \|\ueps\|_{L^2(\G_+;|\vn|)}  = O(\eps^{1/2}), 
&
{\rm (ii)} & \quad \|\ueps - \barueps\|_{L^2(\D)} = O(\eps), \\
{\rm (iii)} & \quad \|\barueps\|_{L^2(\D)} = O(1), 
&
{\rm (iv)} & \quad \|\vd \ueps\|_{L^2(\D)} = O(1).
\end{align*}
\end{lemma}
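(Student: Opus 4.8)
The plan is to read off assertions (i)--(iv) directly from the sharp estimate of Lemma~\ref{lem:aprioriL2} by feeding in the scaling of the data encoded in assumptions (A5) and (A6). The left-hand side of that lemma already has exactly the three quantities $\|\ueps\|_{L^2(\G_+;|\vn|)}^2$, $\frac{1}{\eps}\|\ueps-\barueps\|_{L^2(\D)}^2$, and $\eps\|\barueps\|_{L^2(\D)}^2$ that appear (up to taking square roots) in (i)--(iii), and since the collision operator is positive these three terms are individually nonnegative. Hence it suffices to show that the right-hand side, namely
\begin{align*}
 C\big(\|\geps\|_{L^2(\G_-;|\vn|)}^2 + \tfrac{1}{\eps}\|\barfeps\|_{L^2(\D)}^2 + \eps\|\feps-\barfeps\|_{L^2(\D)}^2\big),
\end{align*}
is $O(\eps)$, after which dividing through and taking square roots yields (i)--(iii) with the stated orders.

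First I would bound each of the three data terms. By (A6), $\|\geps\|_{L^2(\G_-;|\vn|)}^2 = O(\eps)$. By the second part of (A5), $\|\barfeps\|_{L^2(\D)} = O(\eps)$, so $\frac{1}{\eps}\|\barfeps\|_{L^2(\D)}^2 = \frac{1}{\eps}O(\eps^2) = O(\eps)$. For the last term I would use the orthogonal splitting \eqref{eq:split}, which gives $\|\feps-\barfeps\|_{L^2(\D)} \le \|\feps\|_{L^2(\D)} = O(1)$ by the first part of (A5), so $\eps\|\feps-\barfeps\|_{L^2(\D)}^2 = \eps\, O(1) = O(\eps)$. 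Thus the whole right-hand side is $O(\eps)$. Feeding this back into the lemma: the outflow term gives $\|\ueps\|_{L^2(\G_+;|\vn|)}^2 = O(\eps)$, i.e.\ (i); the second term gives $\frac{1}{\eps}\|\ueps-\barueps\|_{L^2(\D)}^2 = O(\eps)$, hence $\|\ueps-\barueps\|_{L^2(\D)}^2 = O(\eps^2)$ and (ii) follows by taking roots; and the third term gives $\eps\|\barueps\|_{L^2(\D)}^2 = O(\eps)$, hence $\|\barueps\|_{L^2(\D)}^2 = O(1)$, which is (iii).

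The only assertion not immediately contained in Lemma~\ref{lem:aprioriL2} is (iv), the bound on the directional derivative, and this is the step I expect to require a separate argument. Here I would go back to the transport equation in the form \eqref{eq:rte}, namely $\vd\ueps = \feps - \Ceps\ueps$, and estimate the $L^2(\D)$ norm of the right-hand side. Using the definition \eqref{eq:Ce} of $\Ceps$ together with (A2)--(A4b), one has $\Ceps\ueps = \gameps\ueps + \sigeps(I-K)\ueps$; since $\bargam$ is bounded and $\gameps = \eps\bargam$, the first piece contributes $O(\eps)\cdot O(1) = O(\eps)$ using (iii). For the scattering piece, I would observe that $(I-K)$ annihilates the velocity average, so $(I-K)\ueps = (I-K)(\ueps-\barueps)$, whence $\|\sigeps(I-K)\ueps\|_{L^2(\D)} \le \frac{1}{\eps}\|\barsig\|_\infty\,\|I-K\|\,\|\ueps-\barueps\|_{L^2(\D)} = \frac{1}{\eps}O(\eps) = O(1)$ by (ii) and (A4b). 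Finally $\|\feps\|_{L^2(\D)} = O(1)$ by (A5). Collecting these, $\|\vd\ueps\|_{L^2(\D)} \le \|\feps\|_{L^2(\D)} + \|\Ceps\ueps\|_{L^2(\D)} = O(1)$, which proves (iv). The subtle point worth flagging is that the naive estimate $\|\sigeps(I-K)\ueps\|\le\frac{1}{\eps}O(1)$ would blow up; the cancellation against the velocity average, so that one gains the extra factor $\eps$ precisely from the sharp bound (ii) on $\|\ueps-\barueps\|_{L^2(\D)}$, is exactly what keeps the directional derivative uniformly bounded.
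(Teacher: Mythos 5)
Your proof is correct. For (i)--(iii) you do exactly what the paper does: plug (A5)--(A6) into the right-hand side of Lemma~\ref{lem:aprioriL2}, observe it is $O(\eps)$, and read off the three bounds term by term. For (iv), however, your route is genuinely different from the paper's. The paper runs a second energy estimate: it tests the equation with $\Cepsm \vd \ueps$, converts $(\ueps,\vd\ueps)$ into boundary terms via Green's formula, absorbs $(\feps,\vd\ueps)_{\Cepsm}$ by Young's inequality, and arrives at $\|\vd\ueps\|_{\Cepsm}^2 \le \|\feps\|_{\Cepsm}^2 + \|\geps\|_{L^2(\G_-;|\vn|)}^2 = O(\eps)$, from which \eqref{eq:equiv2} yields $\|\vd\ueps - \overline{\vd\ueps}\|_{L^2(\D)} = O(1)$ and even $\|\overline{\vd\ueps}\|_{L^2(\D)} = O(\eps)$. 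You instead read $\vd\ueps = \feps - \Ceps\ueps$ pointwise and bound the right-hand side in $L^2(\D)$ directly, using $(I-K)\ueps = (I-K)(\ueps-\barueps)$ (valid by (A4c)) together with the already-established bound (ii) to tame the $\frac{1}{\eps}$ in front of the scattering term; this is a clean, more elementary argument whose key cancellation you correctly identify. The trade-off is that the paper's variational estimate is self-contained (it does not route through (ii)) and delivers the sharper extra information $\|\overline{\vd\ueps}\|_{L^2(\D)} = O(\eps)$, whereas your argument only gives the $O(1)$ bound actually claimed in the lemma --- which is all that is needed here and downstream.
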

\begin{proof}
The first three estimates follow directly from Lemma~\ref{lem:aprioriL2} and assumptions (A5)--(A6). 
Multiplying equation \eqref{eq:rte1} by $\Cepsm \vd \ueps$ and integrating over $\D$ yields
\begin{align} \label{eq:leml2}
\|\vd \ueps\|_{\Cepsm}^2 + (\ueps, \vd \ueps) &= (\feps, \vd \ueps)_\Cepsm. 
\end{align}
Similar as in the proof of Lemma~\ref{lem:aprioriL2}, the second term in \eqref{eq:leml2} can be 
replaced by boundary terms, and the third term  can be estimated by
\begin{align*}
 (\feps, \vd \ueps)_{\Cepsm} \le \frac{1}{2}\|\feps\|_{\Cepsm}^2 + \frac{1}{2} \|\vd \ueps\|^2_{\Cepsm}.
\end{align*}
Substituting these expressions into \eqref{eq:leml2} yields
\begin{align*}
\|\vd \ueps\|_{\Cepsm}^2 + \|\ueps\|_{L^2(\G_+;|\vn|)}^2 
 &\le \|\feps\|_{\Cepsm}^2 + \|\geps\|_{L^2(\G_-;|\vn|)}^2. 
\end{align*}
Assertion (iv) now follows via the norm equivalences \eqref{eq:equiv1} and \eqref{eq:equiv2}. 
\end{proof}

Let us mention, that similar estimates as those of Lemma~\ref{lem:apriori} have been obtained in \cite{BardosGolsePerthameSentis1988} for a nonlinear transport equation.

\section{Convergence to the diffusion approximation} \label{sec:convergence}

\noindent
We now consider a sequence of problems \eqref{eq:rte1}--\eqref{eq:rte2} with coefficients and data satisfying (A2)--(A3) and (A5)--(A6) for parameter $\eps$ tending to zero. 
%
%
%

\begin{lemma} \label{lem:weak} 
Let (A1)--(A6) hold and assume that $\frac{1}{\eps}\barfeps \to \barf \in L^2(\R)$ and $\feps-\barfeps\to 0$ when $\eps \to 0$. 
Then $\ueps \rightharpoonup u_0$ weakly in $\VV^2$ and $\ueps \to u_0$ strongly in $L^2(\D)$ for some $u_0 \in L^2(\D)$. 
Moreover, the limit is independent of the velocity variable $\v$, i.e., $u_0 = \bar u_0$.
\end{lemma}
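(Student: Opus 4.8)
The plan is to combine the uniform a-priori bounds of Lemma~\ref{lem:apriori} with the averaging lemma \eqref{eq:avglem} to extract a strongly convergent subsequence, and then to show that the velocity-dependent part of the limit must vanish. First I would observe that Lemma~\ref{lem:apriori} gives $\|\ueps\|_{L^2(\D)} = O(1)$ and $\|\vd\ueps\|_{L^2(\D)} = O(1)$, so together with the boundary bound (i) the sequence $\ueps$ is bounded in $\VV^2$. Since $\VV^2$ is a Hilbert space, reflexivity yields a subsequence $\ueps \rightharpoonup u_0$ weakly in $\VV^2$ for some $u_0 \in \VV^2 \subset L^2(\D)$. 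The averaging lemma \eqref{eq:avglem} asserts that $\pi : \VV^2 \to L^2(\R)$, $u \mapsto \baru$, is compact, so along this subsequence $\barueps \to \baru_0$ strongly in $L^2(\R)$, where $\baru_0 = \pi u_0$.

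Next I would upgrade this to strong convergence of the full $\ueps$ in $L^2(\D)$, using the orthogonal splitting \eqref{eq:split}, $\ueps = \barueps + (\ueps - \barueps)$. The first term converges strongly by the averaging lemma just invoked. For the second term, Lemma~\ref{lem:apriori}(ii) gives $\|\ueps - \barueps\|_{L^2(\D)} = O(\eps) \to 0$, so the velocity-dependent part vanishes strongly. Hence $\ueps \to \baru_0$ strongly in $L^2(\D)$, and in particular the weak limit $u_0$ coincides with $\baru_0$, i.e. $u_0$ is independent of $\v$. This same estimate (ii) is precisely what identifies the limit as velocity-independent: any weak-$\VV^2$ limit $u_0$ satisfies $u_0 - \bar u_0 = \lim(\ueps - \barueps) = 0$ in $L^2(\D)$, so $u_0 = \bar u_0$.

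Finally I would address the subsequence issue, since the statement claims convergence of the whole sequence, not just a subsequence. The clean way is to note that the above argument shows every subsequence of $\ueps$ has a further subsequence converging strongly in $L^2(\D)$ to a velocity-independent limit; if one can show all such limits coincide, the full sequence converges. Uniqueness of the limit will come from the fact (to be established in the convergence analysis, presumably in the subsequent lemma) that any such $\baru_0$ solves the weak form of the diffusion problem \eqref{eq:da1}--\eqref{eq:da2}, which has a unique solution. At the level of this lemma, however, the cleanest route is to observe that the hypotheses $\tfrac{1}{\eps}\barfeps \to \barf$ and $\feps - \barfeps \to 0$ pin down the data in the limit, so the limit equation and hence $\baru_0$ are uniquely determined, forcing convergence of the entire sequence.

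The main obstacle, and the conceptual heart of the proof, is the use of the averaging lemma \eqref{eq:avglem}: weak $\VV^2$ convergence alone does not give strong $L^2(\D)$ convergence of $\ueps$, because the embedding $\VV^2 \hookrightarrow L^2(\D)$ is \emph{not} compact. What saves the argument is that the problematic velocity-dependent oscillations are controlled directly by the sharp a-priori bound $\|\ueps - \barueps\|_{L^2(\D)} = O(\eps)$, while only the velocity average needs the compactness furnished by \eqref{eq:avglem}. I would therefore be careful to route the strong convergence through the splitting \eqref{eq:split} rather than attempting to apply compactness to $\ueps$ directly; this decomposition is what makes the low-regularity argument work without any smoothness of the coefficients or the domain.
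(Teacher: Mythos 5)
Your proposal is correct and follows essentially the same route as the paper: uniform $\VV^2$ bounds from Lemma~\ref{lem:apriori}, weak compactness to extract a subsequence, the averaging lemma \eqref{eq:avglem} for strong convergence of the averages, the $O(\eps)$ bound on $\ueps-\barueps$ to conclude strong $L^2$ convergence and velocity-independence of the limit, and deferral of the full-sequence convergence to the subsequent identification of the limit as the unique solution of the diffusion problem.
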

\begin{proof}
By Lemma~\ref{lem:apriori}, the solution $\ueps$ is uniformly bounded in $\VV^2$ as $\eps \to 0$. 
This allows us to extract a weakly convergent subsequence, again denoted by $\ueps$, such that $\ueps \rightharpoonup u_0$ in $\VV^2$.  By the averaging lemma \eqref{eq:avg}, 
we know that $\baru_\eps \to \baru_0$ strongly in $L^2$, and by estimate (ii) of Lemma~\ref{lem:apriori} we conclude that $\ueps \to \bar u_0$ strongly in $L^2$. 
Since the limit is unique, we deduce that $u_0 = \baru_0$, which shows the assertions 
for a subsequence. 
Convergence of the whole sequence will be obtained below by showing that the limit is independent of the subsequence.
\end{proof}
Next, we show that any limit of Lemma~\ref{lem:weak} satisfies a diffusion problem similar to \eqref{eq:da1}--\eqref{eq:da2}. 
Since the limit of any subsequence satisfies the same equation, we obtain the convergence of the full sequence
which completes the proof of the previous lemma.
We use the symbol $\baru_0$ for the limit to emphasize that it is independent of $\v$ and proceed as follows:

\smallskip

\noindent 
{\em Step 1:} 
Since $\baru_0$ is independent of $\v$ and $\int_\V \v_i \v_j \d\v = \frac{1}{3} \delta_{ij}$, we get
\begin{align*}
 \|\vd \bar u_0 \|_{L^2(\D)}^2 
 &= \int_\R \nabla \baru_0 \cdot \int_\V \v \otimes \v \d\v \nabla \baru_0 \d\x 
  = \frac{1}{3} \|\nabla \baru_0\|_{L^2(\R)}^2.
\end{align*}
Hence $\baru_0 \in H^1(\R)$, and from Lemma \ref{lem:apriori} (i), condition (A6), and continuity of the trace operator, we infer that $\bar u_0 = 0$ on $\dR$; thus $\baru_0 \in H_0^1(\R)$.

\smallskip 

\noindent 
{\em Step 2:}
Testing equation \eqref{eq:rte1} with $\bar \psi \in H_0^1(\R)$ yields
\begin{align} \label{eq:testpsi}
 (\barfeps,\bar \psi) = (\vd \ueps, \bar\psi) + \eps (\bargam \ueps, \bar\psi).
\end{align}

\smallskip

\noindent 
{\em Step 3:}
Let $\bar\psi$ be as in Step~2. 
Using (A4), we can define a test function 
$$
  \phipsi = \frac{1}{\barsig}(I-K)^\dag \vd  \bar\psi
$$
with zero velocity average. Since the Moore-Penrose inverse $(I-K)^\dag$ is a linear bounded operator, see \eqref{eq:ev},
we conclude that
$$
\|\phipsi\|_{L^2(\R\times\V)} \preceq \|\vd \bar \psi\|_{L^2(\D)} \preceq \|\nabla \bar\psi\|_{L^2(\R)}.
$$

\smallskip

\noindent 
{\em Step 4:}
Testing \eqref{eq:rte1} with the function $\phipsi$ as defined in Step~3, we obtain
\begin{align} \label{eq:testphi1}
  (\feps,\phipsi)
  = (\vd \ueps, \phipsi) + (\Ceps \ueps, \phipsi). 
\end{align}
Using the definition of $\Ceps$ and of $\phipsi$, we can express the third term as
\begin{align*} 
 (\Ceps \ueps, \phipsi)
 &= \eps (\bargam \ueps, \phipsi) + \frac{1}{\eps} (\ueps, \vd \bar\psi)
\end{align*}
and for the last term of this expression, we employ the Green's formula \eqref{eq:green} and the boundary conditions for $\bar\psi$, to obtain
\begin{align*} 
 (\ueps, \vd \bar\psi) = -(\vd \ueps,\bar\psi).
\end{align*}
Substituting these two expressions into \eqref{eq:testphi1} yields 
\begin{align} \label{eq:testphi}
( \feps,\phipsi) =  (\vd \ueps, \phipsi) + \eps (\bargam \ueps, \phipsi) - \frac{1}{\eps}  (\vd \ueps, \bar\psi).
\end{align}

\smallskip

\noindent 
{\em Step 5:}
By adding $\frac{1}{\eps}$ times \eqref{eq:testpsi} and \eqref{eq:testphi}, we now see that
the solutions $\ueps$ of the transport problem \eqref{eq:rte1}--\eqref{eq:rte2} satisfy the variational principle 
\begin{align} \label{eq:step5}
(\feps-\barfeps,\phipsi) + \frac{1}{\eps}
(\barfeps,\bar\psi) 
 &= (\bargam \ueps, \bar\psi) + (\vd \ueps, \phipsi) + \eps (\bargam \ueps, \phipsi)
\end{align}
for any $\bar\psi \in H_0^1(\R)$ and with $\phipsi$ defined as in Step~3.

\smallskip

\noindent
{\em Step 6:}
Since $\phipsi$ and $\ueps$ are uniformly bounded in $L^2(\D)$, the last term of \eqref{eq:step5} tends to zero as $\eps \to 0$. 
The third term of \eqref{eq:step5} can be expanded as
\begin{align} \label{eq:step6}
(\vd \ueps, \phipsi) 
&= (\vd \ueps - \vd\bar u_0, \phipsi) + (\vd\baru_0, \phipsi).
\end{align}
Since $\ueps$ converges to $\baru_0$ weakly in $\VV^2$, the second term of \eqref{eq:step6} vanishes with $\eps \to 0$,
and, using the definition of $\phipsi$, the third term of equation \eqref{eq:step6} can be written as
\begin{align*}
(\vd\baru_0, \phipsi) = \int_\R \nabla \baru_0 \cdot  \frac{1}{\barsig}\int_\V \v (I-K)^\dag \v^\top \d\v \nabla \bar\psi \d\x,
\end{align*}
where $(I-K)^\dag$ is applied to the components of the function $\v\mapsto \v^\top$.
 
\smallskip

\noindent 
{\em Step 7:}
For a.e. $\x \in \R$, we can define a $3 \times 3$ matrix 
\begin{align} \label{eq:A}
 \bar A(\x) = \frac{1}{\barsig(\x)}  \int_\V \v (I-K)^\dag \v^\top \d\v.
\end{align}
Since for  $\xi\in \RR^3$, the function $v^\top\xi$ is orthogonal to $1_\v$ in $L^2(\V)$, 
we conclude from estimate \eqref{eq:ev} that
\begin{align*}
  \xi^\top \bar A(\x) \xi = \frac{1}{\barsig(x)} ( v^\top\xi, (I-K)^\dagger v^\top\xi)_{L^2(\V)} \geq \frac{1}{\barsig(x)} \| v^\top\xi\|_{L^2(\V)}^2 = \frac{1}{3\barsig(x)} |\xi|^2.
\end{align*}
In a similar way one can show a uniform upper bound for $\bar A(x)$. 
Thus $\bar A(x)$ is symmetric, positive definite and bounded uniformly for a.e. $\x \in \R$.

\smallskip

\noindent 
{\em Step 8:}
Taking the limit $\eps \to 0$ in \eqref{eq:step5}, using the previous estimates, and assuming that $\frac{1}{\eps} \bar \feps \to \bar f$ and $\feps-\barfeps\to 0$, 
we now see that any limit $\baru_0$ of a sequence of solutions $\ueps$ with $\eps \to 0$ satisfies 
\begin{align} \label{eq:weak}
(\bar A \nabla \baru_0, \nabla \bar \psi) + (\bar \gamma \baru_0, \bar \psi) = (\bar f, \bar\psi) \qquad \text{for all } \bar\psi \in H_0^1(\R).
\end{align}
This is the weak form of a diffusion equation, and the limit $\baru_0$ is uniquely characterized by this equation. 
We therefore have proven

\begin{theorem} \label{thm:main1}
Let (A1)--(A6) hold and assume that $\frac{1}{\eps} \barfeps \to \barf \in L^2(\R)$ and $\feps-\barfeps\to 0$ in $L^2(\D)$.
Then the solutions $\ueps$ of \eqref{eq:rte1}--\eqref{eq:rte2} converge weakly in $\VV^2$ and strongly in $L^2(\D)$ to the weak solution $\baru_0 \in H_0^1(\R)$ of the diffusion problem
\begin{align} \label{eq:diffapp}
-\div(\bar A \nabla \baru_0) + \bargam \baru_0 &= \barf \quad \text{in } \R, 
&
\baru_0 &= 0 \quad \text{on } \dR
\end{align}
with diffusion tensor $\bar A$ defined as in \eqref{eq:A}.
\end{theorem}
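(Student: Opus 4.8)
The plan is to leverage the uniform a-priori bounds of Lemma~\ref{lem:apriori} to extract a limit, and then to pass to the limit in a weak formulation engineered so as to neutralise the singular $\tfrac{1}{\eps}$ factor carried by $\Ceps$. First I would apply Lemma~\ref{lem:weak} to obtain a subsequence with $\ueps \rightharpoonup \baru_0$ weakly in $\VV^2$ and $\ueps \to \baru_0$ strongly in $L^2(\D)$, the limit being independent of $\v$. To place $\baru_0$ in $H_0^1(\R)$, I would use $\int_\V \v\otimes\v \d\v = \tfrac{1}{3}I$ to get $\|\vd\baru_0\|_{L^2(\D)}^2 = \tfrac{1}{3}\|\nabla\baru_0\|_{L^2(\R)}^2$, so that $\baru_0 \in H^1(\R)$, and then deduce the vanishing trace from the boundary bound in Lemma~\ref{lem:apriori}~(i), assumption (A6), and continuity of the trace operator.

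The crux is the choice of test functions. I would test the transport equation \eqref{eq:rte1} against a fixed $\bar\psi \in H_0^1(\R)$ and, separately, against $\phipsi = \tfrac{1}{\barsig}(I-K)^\dag \vd\bar\psi$, a zero-average function satisfying $\|\phipsi\|_{L^2(\D)} \preceq \|\nabla\bar\psi\|_{L^2(\R)}$ by the bounds \eqref{eq:ev} on the Moore-Penrose inverse. Adding $\tfrac{1}{\eps}$ times the first identity to the second --- after using Green's formula \eqref{eq:green} and the boundary condition to rewrite $(\ueps, \vd\bar\psi) = -(\vd\ueps,\bar\psi)$ --- yields the identity \eqref{eq:step5}, in which the would-be $O(\tfrac{1}{\eps})$ contributions from $\Ceps$ cancel, leaving coefficients that stay uniformly bounded as $\eps \to 0$. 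This cancellation is the entire purpose of the construction and is the step I expect to be the main obstacle to verify carefully.

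With \eqref{eq:step5} in hand, I would pass to the limit term by term. The term $\eps(\bargam\ueps,\phipsi)$ vanishes by the uniform $L^2$ bounds; $(\bargam\ueps,\bar\psi)$ tends to $(\bargam\baru_0,\bar\psi)$ because $\barueps \to \baru_0$ strongly by the averaging lemma and $\bargam$ is bounded; on the right-hand side $(\feps-\barfeps,\phipsi) \to 0$ and $\tfrac{1}{\eps}(\barfeps,\bar\psi) \to (\barf,\bar\psi)$ by hypothesis. The delicate term $(\vd\ueps,\phipsi)$ I would split as $(\vd\ueps - \vd\baru_0,\phipsi) + (\vd\baru_0,\phipsi)$; the first part vanishes since $\vd\ueps \rightharpoonup \vd\baru_0$ weakly in $L^2$ tested against the fixed $\phipsi$, while the second equals $(\bar A\nabla\baru_0,\nabla\bar\psi)$ with $\bar A$ as in \eqref{eq:A}. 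This produces the weak diffusion equation \eqref{eq:weak}.

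Finally, I would record that $\bar A$ is symmetric, bounded, and uniformly positive definite, with $\xi^\top\bar A(\x)\xi \ge \tfrac{1}{3\barsig(\x)}|\xi|^2$ following from \eqref{eq:ev}; together with $\bargam \ge c > 0$ this makes the bilinear form in \eqref{eq:weak} bounded and coercive on $H_0^1(\R)$, so that Lax-Milgram furnishes a unique $\baru_0$. Since every subsequence thus has the same limit, the whole sequence $(\ueps)$ converges, which closes the gap left open in Lemma~\ref{lem:weak} and completes the proof.
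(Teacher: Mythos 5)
Your proposal is correct and follows essentially the same route as the paper: extract a subsequence via the a-priori bounds and the averaging lemma, identify $\baru_0 \in H_0^1(\R)$, test with $\bar\psi$ and $\phipsi = \tfrac{1}{\barsig}(I-K)^\dag \vd\bar\psi$, combine the two identities to cancel the $O(1/\eps)$ terms, pass to the limit to obtain \eqref{eq:weak}, and use positive definiteness of $\bar A$ to get uniqueness of the limit and hence convergence of the full sequence. This matches the paper's Steps 1--8 in both structure and detail.
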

For the scattering operator of the form \eqref{eq:ass1}, one has $\bar A(x) = \frac{1}{3\barsig(x)} I$, and we thus obtain the first part of Theorem~\ref{thm:main} as a special case. 
Let us note that for our arguments to hold we did not require an $L^\infty$ bound on the solution nor its nonnegativity.


\section{Quantitative estimates} \label{sec:rates}

\noindent

The convergence obtained in Theorem~\ref{thm:main1} may be arbitrarily slow in general.
To obtain convergence rates, we will now further estimate the approximation error 
$\|\ueps - \baru\|_{L^2(\D)}$. Let us formally
write the solution as
\begin{align} \label{eq:expansion}
\ueps = \baru_0 + \eps u_1 + \psi_\eps,
\end{align}
where $\baru_0$ denotes the diffusion limit and $u_1 = -\frac{1}{\barsig} (I-K)^\dag \vd \baru_0$.
By the results of the previous section, 
we already know that $\psi_\eps$ converges to zero
if the assumptions of Theorem~\ref{thm:main1} are valid.
We will show in the following that the remainder $\psi_\eps$ can be bounded
in terms of $\eps$ under some mild additional regularity assumptions.

\smallskip

\noindent
{\em Step 1:}
We start by stating a regularity result for the diffusion limit $\baru_0$.
Assume that $\dR \in C^{1,1}$, $\barsig \in W^{1,\infty}(\R)$, and $\barf \in L^p(\R)$ for some $p>3$. 
Then from elliptic regularity results \cite{GT}, we know that $\baru_0 \in W^{2,p}(\R)$ with $\|\baru_0\|_{W^{2,p}(\R)} \preceq \|\barf\|_{L^p(\R)}$. 
Note that, as a consequence, the diffusion equation \eqref{eq:diffapp} holds pointwise a.e.\@ in $\R$.
From standard embedding theorems, we further deduce that $\nabla \baru_0$ is continuous and $\|\nabla \baru_0\|_{L^\infty(\dR)} \preceq \|\barf\|_{L^p(\R)}$.

\smallskip

\noindent
{\em Step 2:}
Using the definition of the remainder $\psi_\eps$ and equation \eqref{eq:rte1}, we obtain
\begin{align*}
\vd \psi_\eps + \Ceps \psi_\eps 
 &= \feps - \vd (\baru_0 + \eps u_1) - \Ceps (\baru_0 + \eps u_1) = (*). 
\end{align*}
The right-hand side of this equation can be further expanded as
\begin{align*}
(*) &= -\frac{\barsig}{\eps} (I-K)\baru_0 - (\vd \baru_0 +\barsig(I-K) u_1) 
\\  & \qquad \qquad  
+ \eps (\frac{1}{\eps} \feps - \vd u_1 -  \bargam \baru_0) - \eps^2 \bargam u_1 = \text{(i)}+\text{(ii)}+\text{(iii)}+\text{(iv)}.
\end{align*}
The term (i) vanishes, since $\baru_0$ lies in the kernel of $(I-K)$. 
The second term (ii) is zero by definition of $u_1$.
Since $\baru_0$ solves \eqref{eq:diffapp} pointwise, we get
\begin{align*}
\text{(iii)} &= (\feps - \eps \barf) + \eps (-\div(A \nabla \baru_0) - \vd u_1) = \text{(iiia)} + \text{(iiib)}.
\end{align*}
The term (iiia) can be treated by assumptions on $\feps$, e.g., by requiring that $\feps = \eps \barf$. 
From the definition of $\bar A$ and of $u_1$, we then deduce  that
\begin{align*}
\text{(iiib)} = \eps (\overline{\vd u_1} - \vd u_1) =: \eps \widetilde f
\end{align*}
where $\widetilde f$ is bounded in $L^2(\D)$ and has zero velocity average.

\smallskip 

\noindent
{\em Step 3:}
Collecting the previous formulas, we see that $\psi_\eps$ satisfies
\begin{align} \label{eq:psieps}
\vd \psi_\eps + \Ceps \psi_\eps &= \eps \widetilde f - \eps^2 \bargam u_1 &\,&  \text{in } \D, \\
\psi_\eps &= \geps - \eps u_1 &\,& \text{on } \G_-.
\end{align}
To estimate the norm of $\psi_\eps$, we split the remainder by $\psi_\eps = \psi_\eps^{f} + \psi_\eps^{g}$
where $\psi_\eps^{f}$ and $\psi_\eps^{g}$ denote the solutions of the system \eqref{eq:psieps} 
with homogeneous boundary data and right-hand side, respectively.
The two components can now be bounded independently by the a-priori estimates of Section~\ref{sec:apriori}. 

\smallskip

{\em Step 4:} 
Applying Lemma~\ref{lem:aprioriL2} with $\feps=\eps \widetilde f - \eps^2\bargam u_1$ and $\geps=0$, and noting that $\widetilde f$ and $u_1$ have 
zero velocity average, we obtain $\eps\|\psi_\eps^f\|_{L^2(\D)}^2 \preceq \eps \| \eps \widetilde f- \eps^2\bargam u_1\|^2_{L^2(\D)}$. 
Since $\widetilde f$ and $u_1$ are uniformly bounded, we conclude that $\|\psi_\eps^f\|_{L^2(\D)} = O(\eps)$.

\smallskip

{\em Step 5:}
To bound $\psi_\eps^g$, we utilize the estimate \eqref{eq:aprioriLp} for $p=\infty$ with $\feps=0$ and $\geps$ replaced by $\geps-\eps u_1$. This yields $\|\psi_\eps^g\|_{L^\infty(\D)} \preceq \|\geps - \eps u_1\|_{L^\infty(\G_-)}$. 
Assuming that $\|\geps\|_{L^\infty(\G_-)}=O(\eps)$, we can conclude that $\|\psi_\eps^g\|_{L^2(\G_-)} = O(\eps)$ by using the relation of $u_1$ and $\bar u_0$, and the estimates of Step~1. 
 
\medskip

\noindent
A combination of the previous considerations then yields

\begin{theorem} \label{thm:main2}
Let (A1)--(A6) hold, and additionally assume that
$\dR \in C^{1,1}$, $\barsig \in W^{1,\infty}(\R)$, $\feps = \eps \barf$ with $\barf \in L^p(\R)$, $p>3$, and $\|\geps\|_{L^\infty(\G_-)} = O(\eps)$. 
Then $\|\ueps - \baru\|_{L^2(\D)} = O(\eps)$, and the constant in this estimate only depends on the bounds for the coefficients and the data, and on the domain.
\end{theorem}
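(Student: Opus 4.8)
The plan is to base the estimate on the two-term ansatz \eqref{eq:expansion}, namely $\ueps = \baru_0 + \eps u_1 + \psi_\eps$ with $u_1 = -\frac{1}{\barsig}(I-K)^\dag \vd \baru_0$. Since $u_1$ is controlled by $\nabla\baru_0$ and hence, through elliptic regularity, by $\barf$, the middle term contributes only $\eps\|u_1\|_{L^2(\D)} = O(\eps)$, so by the triangle inequality the whole task reduces to proving $\|\psi_\eps\|_{L^2(\D)} = O(\eps)$. The first ingredient I would secure is the regularity of the diffusion limit: under $\dR \in C^{1,1}$, $\barsig \in W^{1,\infty}(\R)$ and $\barf \in L^p(\R)$ with $p>3$, $W^{2,p}$-regularity yields $\baru_0 \in W^{2,p}(\R)$, so that \eqref{eq:diffapp} holds pointwise a.e., and the embedding $W^{2,p} \hookrightarrow C^1$ in three dimensions gives a continuous $\nabla\baru_0$ with the boundary bound $\|\nabla\baru_0\|_{L^\infty(\dR)} \preceq \|\barf\|_{L^p(\R)}$. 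This is exactly what makes $u_1$ and its trace well-defined and bounded.

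Inserting the ansatz into \eqref{eq:rte} and collecting powers of $\eps$, the governing equation \eqref{eq:psieps} for $\psi_\eps$ emerges: the $O(1/\eps)$ term vanishes because $\baru_0 \in N(I-K)$, the $O(1)$ term cancels by the definition of $u_1$, and the $O(\eps)$ term collapses once the pointwise form of \eqref{eq:diffapp} is used, leaving a source $\eps\widetilde f - \eps^2\bargam u_1$ with $\widetilde f = \overline{\vd u_1} - \vd u_1$ and boundary data $\geps - \eps u_1$. The structural feature I would emphasize is that both $\widetilde f$ and $u_1$ have zero velocity average, so the source of the $\psi_\eps$-equation has vanishing average.

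I would then split $\psi_\eps = \psi_\eps^f + \psi_\eps^g$ into source and boundary parts. For $\psi_\eps^f$ the sharp bound of Lemma~\ref{lem:aprioriL2} applies: because the source averages to zero, the dangerous $\frac{1}{\eps}\|\barfeps\|^2$ term drops out and only the benign $\eps\|\feps - \barfeps\|^2 = O(\eps^3)$ survives, whence $\eps\|\psi_\eps^f\|_{L^2(\D)}^2 \preceq \eps^3$ and $\|\psi_\eps^f\|_{L^2(\D)} = O(\eps)$. For $\psi_\eps^g$ I would use the $L^\infty$ a-priori estimate \eqref{eq:aprioriLp} with $p=\infty$ and $\feps=0$, giving $\|\psi_\eps^g\|_{L^\infty(\D)} \preceq \|\geps - \eps u_1\|_{L^\infty(\G_-)}$; the hypothesis $\|\geps\|_{L^\infty(\G_-)} = O(\eps)$ together with the Step~1 bound on $u_1|_{\dR}$ gives $O(\eps)$, and since $\D$ has finite measure this transfers to $L^2(\D)$. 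Adding the two bounds completes the estimate.

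The main obstacle is getting the correct power of $\eps$, and it is entirely a matter of regularity. For the source term the delicate point is that $\widetilde f$ must have vanishing velocity average; this is precisely what upgrades the sharp $L^2$ estimate from $O(1)$ to $O(\eps)$, and it depends on invoking \eqref{eq:diffapp} pointwise, hence on the $W^{2,p}$-regularity of $\baru_0$. For the boundary term the delicate point is the $L^\infty$ trace bound on $u_1$, which forces continuity of $\nabla\baru_0$ up to $\dR$ and is the reason $p>3$ is required; it also explains why the stronger hypothesis $\|\geps\|_{L^\infty(\G_-)} = O(\eps)$ replaces the merely $L^2$ condition (A6). Once these two regularity facts are in hand, the remaining steps are routine.
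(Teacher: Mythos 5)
Your proposal is correct and follows essentially the same route as the paper's own proof: the two-term ansatz with $u_1 = -\frac{1}{\barsig}(I-K)^\dag \vd \baru_0$, the $W^{2,p}$-regularity of $\baru_0$ to make \eqref{eq:diffapp} hold pointwise and to control $u_1$ up to the boundary, the cancellation of the $O(1/\eps)$ and $O(1)$ terms, the split $\psi_\eps = \psi_\eps^f + \psi_\eps^g$, the sharp $L^2$ estimate exploiting the vanishing velocity average of the source, and the $L^\infty$ bound for the boundary part. You also correctly identify the two delicate points (zero average of $\widetilde f$ and the $L^\infty$ trace of $u_1$, forcing $p>3$), which is exactly where the paper's argument hinges.
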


With the same arguments as used at the end of the previous section, 
the second assertion of Theorem~\ref{thm:main} now follows as a special case of this result.

\section{Discussion} \label{sec:discussion}

Let us mention some implications of our results and highlight questions that remain open:
Since we only consider bounded domains here, 
one directly obtains convergence and convergences rates also in $L^p$ for $1 \le p \le 2$  
under the assumptions of Theorems~\ref{thm:main1} and \ref{thm:main2}, respectively. 
In order to establish convergence in $L^p$ for $p > 2$,  we further assume that $\feps = \eps \barf$ and $\geps = \eps g$ with $\barf \in L^\infty(\R)$ and $g \in L^\infty(\G_-)$. 
Using the bounds of Lemma~\ref{lem:aprioriLp} for $p = \infty$, and the convergence in $L^2$ provided by Theorem~\ref{thm:main1}, 
one can see that $\bar u_0 \in L^\infty(\R)$, 
and that $\ueps \rightharpoonup^* u_0$ weak$^*$ in $L^\infty(\D)$ and $\ueps \to \bar u_0$ strongly in $L^p(\D)$ for all $p < \infty$; see also the results of \cite{BardosBernardGolseSentis2014} about weak$^*$ convergence for the time-dependent transport equation.
The question, if strong convergence also holds in $L^\infty$ without further regularity requirements, remains open.

With interpolation arguments, one can show that $\|\ueps - \baru_0\|_{L^p(\D)}=O(\eps^{2/p})$ under the assumptions of Theorem~\ref{thm:main2}.
It is not clear, if the full $O(\eps)$ rate holds without further assumptions. 
%
%
Under sufficient regularity, convergence in $L^p$, $1 \le p \le \infty$ and $O(\eps)$ estimates  follow from the results of \cite{BardosSantosSentis84,DL93vol6}.

For the derivation of Theorem~\ref{thm:main1}, we utilized uniform a-priori bounds for the directional derivatives $\vd \ueps$ in $L^2$. As the following argument shows, such a uniform estimate cannot hold in $L^\infty$ without spatial regularity of the parameter $\barsig$: 
Assume that $\vd \ueps$ is bounded uniformly in $L^\infty(\D)$.
Then $\bar A \nabla u_0$ must be bounded in $L^\infty(\R)$ as well, i.e., for any $\bar f \in L^\infty$ 
the solution $\bar u_0$ of the diffusion problem would lie in $W^{1,\infty}(\R)$. 
This is however not true, in general, if the coefficient $\barsig$ has jumps; see e.g. \cite{Groeger89}. 


Let us finally mention some directions in which our results can be generalized more or less directly:
One can handle more general, space dependent scattering operators $K:L^2(\D)\to L^2(\D)$, e.g., by assuming that 
the mapping $x\mapsto K(x)$ is Bochner integrable, that $K(x):L^2(\V)\to L^2(\V)$ satisfies the conditions (A4) a.e. in $\R$, and that multiplication with $\sigma$ commutes with $K$. 
Also scattering operators with more general eigenfunctions for the zero eigenvalue of $I-K$ can be considered. 
If the eigenfunction is space dependent, an additional drift term has to be included in the diffusion problem; 
see \cite{DL93vol6,GoudonMellet2003}.
The conditions on the parameters $\sigma_\eps$ and $\gamma_\eps$ can be relaxed as well, 
e.g., the parameters may depend on $\v$ in a certain form; see e.g. \cite{SanchezRagusaMasiello2008}.
As can be seen from the proofs of our results, also the conditions on the data can be relaxed to some extent. 
Let us finally mention that the extension to non-mono-kinetic problems seems possible with similar arguments as in \cite{DL93vol6} or \cite{BardosBernardGolseSentis2014}.

\bigskip

\section*{Acknowledgments}

\noindent
The first author acknowledges support by DFG (Deutsche Forschungsgemeinschaft) through grants IRTG 1529 and GSC 233.

\bibliographystyle{plain}
\bibliography{bib}

\begin{thebibliography}{10}

\bibitem{Agoshkov98}
V.~Agoshkov.
\newblock {\em Boundary Value Problems for Transport Equations}.
\newblock Modeling and Simulation in Science, Engineering and Technology.
  Birkh\"auser, Boston, 1998.

\bibitem{BardosBernardGolseSentis2014}
C.~Bardos, E.~Bernard, F.~Golse, and R.~Sentis.
\newblock The diffusion approximation for the linear {B}oltzmann equation with
  vanishing scattering coefficient.
\newblock {\em to appear in Communications in Mathematical Sciences}, 2014.

\bibitem{BardosGolseSentisPerthame1987}
C.~Bardos, F.~Golse, B.~Perthame, and R.~Sentis.
\newblock The {R}osseland approximation for the radiative transfer equations.
\newblock {\em Communications on Pure and Applied Mathematics}, 40:691--721,
  1987.

\bibitem{BardosGolsePerthameSentis1988}
C.~Bardos, F.~Golse, B.~Perthame, and R.~Sentis.
\newblock The nonaccretive radiative transfer equations: Existence of solutions
  and {R}osseland approximation.
\newblock {\em Journal of Functional Analysis}, 77:434--460, 1988.

\bibitem{BardosSantosSentis84}
C.~Bardos, R.~Santos, and R.~Sentis.
\newblock Diffusion approximation and computation of the critical size.
\newblock {\em Transactions of the American Mathematical Society},
  284:617--649, 1984.

\bibitem{BlankenshipPapanicolaou78}
G.~Blankenship and G.~C. Papanicolaou.
\newblock Stability and control of stochastic systems with wide-band noise
  disturbance. {I}.
\newblock {\em SIAM J. Appl. Math.}, 34:437--476, 1978.

\bibitem{Cessenat84}
M.~Cessenat.
\newblock Th\'eor\`emes de trace {$L^p$} pour des espaces de fonctions de la
  neutronique.
\newblock {\em C. R. Acad. Sci. Paris S\'er. I Math.}, 299:831--834, 1984.

\bibitem{DL93vol6}
R.~Dautray and J.~L. Lions.
\newblock {\em Mathematical Analysis and Numerical Methods for Science and
  Technology, Evolution Problems {II}}, volume~6.
\newblock Springer, Berlin, 1993.

\bibitem{DegondMas-Gallic87}
P.~Degond and S.~Mas-Gallic.
\newblock Existence of solutions and diffusion approximation for a model
  {F}okker-{P}lanck equation.
\newblock {\em Transport Theory and Statistical Physics}, 16:589--636, 1987.

\bibitem{deVorePetrova00}
R.~DeVore and G.~Petrova.
\newblock The averaging lemma.
\newblock {\em Journal of the American Mathematical Society}, 14(2):279--296,
  2000.

\bibitem{ES13a}
H.~Egger and M.~Schlottbom.
\newblock An {$L^p$} theory for stationary radiative transfer.
\newblock {\em Applicable Analysis}, 2013, to appear.
  doi:10.1080/00036811.2013.826798.

\bibitem{GerardGolse1992}
P.~G{\'e}rard and F.~Golse.
\newblock Averaging regularity results for {PDE}s under transversality
  assumptions.
\newblock {\em Communications on Pure and Applied Mathematics}, 45(1):1--26,
  1992.

\bibitem{Germogenova98}
T.~A. Germogenova.
\newblock Asymptotic approximations to transport equation solution in optically
  thick domains.
\newblock {\em Transport Theory and Statistical Physics}, 27(5--7):523--546,
  1998.

\bibitem{GT}
D.~Gilbarg and N.~S. Trudinger.
\newblock {\em Elliptic Partial Differential Equations of Second Order}.
\newblock Springer, Berlin, 1998.

\bibitem{GolseLionsPerthameSentis88}
F.~Golse, P.~L. Lions, B.~Perthame, and R.~Sentis.
\newblock Regularity of the moments of the solution of a transport equation.
\newblock {\em Journal of Functional Analysis}, 76:110--125, 1988.

\bibitem{GoudonMellet2003}
T.~Goudon and A.~Mellet.
\newblock Homogenization and diffusion asymptotics of the linear {B}oltzmann
  equation.
\newblock {\em {ESAIM}: Control, Optimisation and Calculus of Variation},
  9:371--398, 2003.

\bibitem{Groeger89}
K.~Gr\"oger.
\newblock A {$W^{1,p}$}-estimate for solutions to mixed boundary value problems
  for second order elliptic differential equations.
\newblock {\em Math. Ann.}, 283:679--687, 1989.

\bibitem{GuoHan2012}
B.~Guo and Y.~Han.
\newblock Diffusion limit of a small mean free path of radiative transfer
  equations with absorbing boundary condition.
\newblock {\em Transport Theory and Statistical Physics}, 41(7):5525--582,
  2012.

\bibitem{HabMat75}
G.~J. Habetler and B.~J. Matkowsky.
\newblock Uniform asymptotic expansions in transport theory with small mean
  free paths, and the diffusion approximation.
\newblock {\em Journal of Mathematical Physics}, 16(4):846--854, 1975.

\bibitem{Larsen2010}
E.~W. Larsen.
\newblock Asymptotic diffussion and simplified {$P_N$} approximations for
  diffusive and deep penetration problems. part 1: Theory.
\newblock {\em Transport Theory and Statistical Physics}, 39(2--4):110--163,
  2010.

\bibitem{LarsenKeller74}
E.~W. Larsen and J.~B. Keller.
\newblock Asymptotic solution of neutron transport problems for small mean free
  paths.
\newblock {\em Journal of Mathematical Physics}, 15(1):75--81, 1974.

\bibitem{LarsenMorel89}
E.~W. Larsen and J.~E. Morel.
\newblock Asymptotic solutions of numerical transport problems in optically
  thick, diffusive regimes {II}.
\newblock {\em Journal of Computational Physics}, 83:212--239, 1989.

\bibitem{LarsenMorelMiller87}
E.~W. Larsen, J.~E. Morel, and W.~F. Miller~Jr.
\newblock Asymptotic solutions of numerical transport problems in optically
  thick, diffusive regimes.
\newblock {\em Journal of Computational Physics}, 69:283--324, 1987.

\bibitem{SanchezRagusaMasiello2008}
R.~Sanchez, J.~Ragusa, and E.~Masiello.
\newblock Asymptotic theory of the linear transport equation in anisotropic
  media.
\newblock {\em Journal of Mathematical Physics}, 49(083504):1--18, 2008.

\end{thebibliography}

\end{document}